\pgfplotsset{compat=1.15}
\renewcommand{\epsilon}{\varepsilon}
\renewcommand{\diamond}{\diamondsuit}
\DeclareMathOperator{\lk}{lk}
\DeclareMathOperator{\ex}{ex}
\newtheorem{theorem}{Theorem}
\newtheorem{lemma}[theorem]{Lemma}
\newtheorem{proposition}[theorem]{Proposition}
\newtheorem{conjecture}[theorem]{Conjecture}
\newtheorem{definition}[theorem]{Definition}
\newtheorem{question}[theorem]{Question}
\title{A conditional lower bound for the Tur\'an number of spheres}
\author{Andrew Newman\thanks{Carnegie Mellon University}, Marta Pavelka\thanks{Carnegie Mellon University}}
\begin{document}
\maketitle
\begin{abstract}
We consider the hypergraph Tur\'an problem of determining $\ex(n, S^d)$, the maximum number of facets in a $d$-dimensional simplicial complex on $n$ vertices that does not contain a simplicial $d$-sphere (a \emph{homeomorph} of $S^d$) as a subcomplex. We show that if there is an affirmative answer to a question of Gromov about sphere enumeration in high dimensions, then $\ex(n, S^d) \geq \Omega(n^{d + 1 - (d + 1)/(2^{d + 1} - 2)})$. Furthermore, this lower bound holds unconditionally for 2-LC spheres, which includes all shellable spheres and therefore all polytopes. We also prove an upper bound on $\ex(n, S^d)$ of $O(n^{d + 1 - 1/2^{d - 1}})$ using a simple induction argument. We conjecture that the upper bound can be improved to match the conditional lower bound.
\end{abstract}

\section{Introduction}
Recall that the Tur\'an number of a graph $H$, denoted $\ex(n, H)$, is the maximum number of edges in an $H$-free graph on $n$ vertices. Establishing bounds for the Tur\'an number of various graphs and families of graphs is at the heart of extremal graph theory. Moreover, there has also been a great deal of work on hypergraph Tur\'an problems, \cite{KeevashSurvey} surveys many of these results. 

Here, we take a topological perspective on a hypergraph Tur\'an problem. As such we will regard $(d + 1)$-uniform hypergraphs as (pure) $d$-dimensional simplicial complexes and study the bounds for \emph{homeomorphs} of spheres. For $\mathcal{H}$ a family of $d$-complexes, we denote by $\ex(n, \mathcal{H})$ the maximum number of $d$-faces in a pure $d$-complex on $n$ vertices which does not contain any complex in $\mathcal{H}$ as a subcomplex. While $\mathcal{H}$ could be any family of complexes, here we will focus on the case that $\mathcal{H}$ is some collection of subcomplexes that are all homeomorphic to a particular fixed topological space. 

Our work is motivated by recent results of Keevash, Long, Narayanan, Scott, and Yap \cite{KeevashLongNarayananScott, LongNarayananYap} who study homeomorph Tur\'an problems for arbitrary complexes, as well as by the work of Kupavskii, Polyanskii, Tomon, Zakharov \cite{KupavskiiPolyanskiiTomonZakharov} and Sankar \cite{Sankar} establishing the correct asymptotics of homeomorphs of arbitrary fixed surfaces in 2-dimensional simplicial complexes. This work on surfaces is a far-reaching extension of the work of Brown, Erd\H{o}s, and S\'os \cite{BrownErdosSos} who initially studied Tur\'an problems for 2-spheres. 

Particularly relevant to our study here is the main result of Long, Narayanan, and Yap \cite{LongNarayananYap} who show that for any $d \geq 1$ there exists a constant $\lambda_d \geq d^{-2d^2}$ such that for any fixed $d$-complex $S$ there is an upper bound of $O(n^{d + 1 - \lambda_d})$ for the Tur\'an number of the family of homeomorphs of $S$. In the conclusion of their paper, they speculate that perhaps the precise value of $\lambda_d$ corresponds with the exponent for the Tur\'an number for homeomorphs of $d$-dimensional spheres (equivalently for simplicial $d$-spheres). They point out though that establishing this exponent for the $d$-sphere itself is an open problem. It is this problem that we study here. 

It is well known that spheres in dimension 3 and higher are much different combinatorially than circles and 2-spheres, and this appears to be a major obstacle for this high-dimensional Tur\'an problem. Letting $\ex(n, S^2)$ denote the maximum number of triangles in a 2-complex on $n$ vertices that contains no homeomorph of $S^2$, Brown, Erd\H{o}s, and S\'os \cite{BrownErdosSos} proved
\[\ex(n, S^2) = \Theta(n^{5/2}).\]

While the original proof for the lower bound in \cite{BrownErdosSos} gave an explicit construction, Linial \cite{LinialSlides} gave a simple randomized construction that extends to any fixed surface. The upper bound of $O(n^{5/2})$ has recently been shown to also hold for any fixed surface \cite{KupavskiiPolyanskiiTomonZakharov, Sankar}. It is conjectured, see \cite{KeevashLongNarayananScott}, that $\Theta(n^{5/2})$ is the correct answer for triangulations of any fixed 2-complex. The best-known upper bound for triangulations of any 2-complex is $O(n^{14/5})$ due to \cite{KeevashLongNarayananScott}.

The key to Linial's randomized construction is the result of Tutte counting labeled 2-spheres on $k$ vertices. Tutte showed that the number of labeled 2-spheres on $k$ vertices is at most $O \left(k! \left( \frac{256}{27} \right)^k \right)$. However, in dimensions 4 and higher, Kalai showed \cite{KalaiSpheres} that there are at least $\exp(\Theta(k^{\lfloor d/2 \rfloor}))$ simplicial $d$-spheres on $k$ labeled vertices, and even for $d = 3$ there is a lower bound of $\exp(\Theta(k^2))$ due to Nevo, Santos, and Wilson \cite{NevoSantosWilson}. These bounds on the number of $d$-spheres for $d > 2$ suggest that a different approach is needed to give a lower bound for $\ex(n, S^d)$, the maximum number of facets in a $d$-complex on $n$ vertices with no homeomorph of $S^d$.

However, if we view Tutte's bound from the point of view of the number of \emph{facets}, we arrive at an interesting open problem. In the 2-dimensional case, the $k!$ in Tutte's result comes from labeling the vertices, and by the Euler characteristic, the number of \emph{unlabeled} 2-spheres with $m$ facets is at most $O \left( \left(\frac{16}{3\sqrt{3}} \right)^m \right)$.  By trying to generalize this type of bound, we get to an important question popularized by Gromov \cite{GromovSpaces}: For $d \geq 3$ is there a constant $C_d$ such that the number of unlabeled $d$-spheres with $m$ facets is at most $C_d^m$?
For the importance of this question in physics, see \cite{PonzanoRegge, Regge, ReggeWilliams}, there has been a search for classes of triangulations of spheres of exponential size in terms of the number of facets since the sixties. 

Some results in this direction include the work of Durhuus and Jónsson, who proved that the class of LC (locally constructible) 3-spheres has an exponential size \cite{DurhuusJonsson}. Benedetti and Ziegler generalized this result to higher dimensions and showed that LC $d$-spheres with $m$ facets are less than $2^{d^2 \, m}$ \cite{BenedettiZiegler}. This was, in particular, the first proof that the number of simplicial polytopes is exponential, since the boundaries of simplicial polytopes are shellable spheres and all shellable spheres are LC \cite{BenedettiZiegler}. Benedetti and the second author defined the class of 2-LC spheres that generalizes the LC notion. They proved that this broader class also has an exponential size. Another exponentially large class of triangulated $d$-spheres are triangulations with bounded `discrete Morse vector' by Benedetti \cite{BenedettiMorse}. This class contains the LC class. However, there is no known characterization of the 2-LC class in terms of the discrete Morse vector.

Our main theorem will hold for classes of spheres like these which are ``only exponentially large", so to state our main result, we introduce the following definition.

\begin{definition}
    A class $\mathcal{S}$ of unlabeled simplicial $d$-spheres has \emph{exponential size} in terms of the number of facets $m$ if there exists a constant $C = C(\mathcal{S}, d)$ such that for all $m$ the number of $d$-spheres in $\mathcal{S}$ with $m$ facets is less than $C^m$.
\end{definition}

Our main theorem is as follows.
\begin{theorem}\label{maintheorem}
Fix $d \geq 3$, and let $\mathcal{S}$ be a collection of unlabeled $d$-spheres that has exponential size, then 
\[\ex(n, \mathcal{S}) = \Omega \left(n^{d + 1 - (d + 1)/(2^{d + 1} - 2)}\right).\]
\end{theorem}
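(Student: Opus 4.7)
The plan is to adapt Linial's randomized-with-alteration construction (used in proving $\ex(n, S^2) = \Omega(n^{5/2})$) to higher dimensions. I would sample the random $d$-complex $Y_d(n,p)$ on vertex set $[n]$ in which each $(d+1)$-subset is included as a facet independently with probability $p = c \, n^{-\alpha}$, taking $\alpha = (d+1)/(2^{d+1}-2)$ and $c > 0$ a small constant. The expected number of facets is $p\binom{n}{d+1} = \Theta(n^{d+1-\alpha})$. For each unlabeled $S \in \mathcal{S}$ with $v$ vertices and $f$ facets, the number of embedded copies of $S$ in $K_n^{(d+1)}$ is at most $n^v$, and each is present in $Y_d(n,p)$ with probability $p^f$; grouping by facet count $f(S) = m$ and invoking the exponential-size hypothesis $|\{S \in \mathcal{S} : f(S) = m\}| \leq C^m$ gives
\[
\E[N] \;\leq\; \sum_{m \geq d+2} C^m \cdot n^{v_{\max}(m)} \cdot p^m,
\]
where $N$ is the total number of copies of spheres in $\mathcal{S}$ occurring in $Y_d(n,p)$ and $v_{\max}(m)$ is the largest vertex count over spheres in $\mathcal{S}$ with $m$ facets.

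Assuming the sum above is shown to be $o(n^{d+1-\alpha})$, the alteration step---delete one facet from each surviving copy of a sphere in $\mathcal{S}$---produces an $\mathcal{S}$-free $d$-complex whose expected number of remaining facets is $\Omega(n^{d+1-\alpha})$. By averaging, some realization attains this bound, and the theorem follows.

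The hard part will be establishing the sharp bound on $\E[N]$ needed to permit $\alpha = (d+1)/(2^{d+1}-2)$. A direct application of the Lower Bound Theorem for simplicial spheres, $v(S) \leq f(S)/d + O(1)$ (tight for stacked spheres), yields only the threshold $\alpha = 1/d$; this matches the target for $d=2$ (since then $(d+1)/(2^{d+1}-2) = 1/2 = 1/d$) but is strictly weaker for $d \geq 3$. The observation that $(d+1)/(2^{d+1}-2)$ equals $(v-d-1)/(f-2)$ precisely for the $(d+1)$-dimensional cross-polytope suggests that the refined analysis should combine LBT with the Upper Bound Theorem $f \leq O(v^{\lceil d/2 \rceil})$ to pin down the effective range of $(v, f)$ pairs, or alternatively modify the alteration to delete two facets per sphere copy (reflecting the ``$f-2$'' denominator) so as to extract the extra amortized savings per copy.
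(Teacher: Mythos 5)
Your proposal has the right scaffolding (random $d$-complex at density $p = cn^{-\alpha}$, enumerate-by-facet-count, exponential-size hypothesis, alteration), and you've correctly diagnosed exactly where it breaks down: the general Lower Bound Theorem for simplicial spheres only gives $f_0(S) \leq f_d(S)/d + O(1)$, which forces $\alpha \geq 1/d$, strictly worse than the target $(d+1)/(2^{d+1}-2)$ when $d \geq 3$. But the resolution you gesture at is not the one that works, and the actual missing idea is not in your proposal.

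The missing idea is to restrict attention to \emph{balanced} spheres. Goff, Klee, and Novik proved that a balanced simplicial $d$-sphere satisfies $f_d \geq \frac{2^{d+1}-2}{d+1}f_0 - (2^{d+1}-4)$, which is exactly the bound $f_0 \leq \frac{d+1}{2^{d+1}-2}(m + O(1))$ you need to push $\alpha$ down to $(d+1)/(2^{d+1}-2)$. So one runs your enumeration and alteration only over the \emph{balanced} members of $\mathcal{S}$, after which the complex $\widetilde Y$ still has $\Omega(n^{d+1-\alpha})$ facets and no balanced sphere from $\mathcal{S}$. The unbalanced spheres in $\mathcal{S}$ are then killed by a separate, deterministic-in-spirit step: color the $n$ vertices independently and uniformly with $d+1$ colors and keep only the rainbow facets of $\widetilde Y$. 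The resulting complex $Z$ is properly $(d+1)$-colored, hence cannot contain any non-balanced subcomplex, and with high probability it retains a $\frac{(d+1)!}{(d+1)^{d+1}}$ fraction of the facets, so only a constant is lost. Neither of your suggested repairs supplies this: the Upper Bound Theorem gives $f \leq O(v^{\lceil d/2 \rceil})$, i.e., a \emph{lower} bound on $v$ in terms of $f$, which is the wrong direction for bounding $n^{v}$; and deleting two facets per copy instead of one leaves the first-moment computation unchanged asymptotically. Your remark that $(d+1)/(2^{d+1}-2)$ is realized by the cross-polytope is the right intuition — cross-polytopes are balanced and are the extremal examples for Goff--Klee--Novik — but the conceptual step you are missing is that one may, without loss, worry only about balanced spheres.
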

This theorem implies that if there is an affirmative answer to Gromov's question then $\ex(n, S^d) = \Omega \left(n^{d + 1 - (d + 1)/(2^{d + 1} - 2)}\right)$. Conversely, if our lower bound does not actually hold for the class of all triangulations of the $d$-sphere then a proof of that fact would require finding an exceptionally complicated triangulated sphere, as all classes which are known to have exponential size include 2-LC spheres with in turn include shellable spheres. For convenience we restrict our focus to spheres; however, Theorem \ref{maintheorem} holds if we replace $d$-spheres with $d$-manifolds, as we further comment on in the conclusion.

For comparison with our lower bound, we also give a simple inductive proof for an upper bound; the $d = 2$ case is in the original paper of Brown, Erd\H{o}s, and S\'os \cite{BrownErdosSos}. 
\begin{proposition}\label{upperboundtheorem}
For $d \geq 2$ the Tur\'an number for homeomorphs of $S^d$ satisfies
\[\ex(n, S^d) \leq O(n^{d + 1 - 1/2^{d - 1}}).\]
\end{proposition}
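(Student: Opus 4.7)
The plan is to induct on $d \geq 2$, using as the base case the Brown-Erd\H{o}s-S\'os bound $\ex(n, S^2) = O(n^{5/2})$. The only topological input is the suspension identity $S^0 * S^{d-1} \cong S^d$; the rest is elementary double counting on pairs of facets meeting in a common $(d-1)$-face.

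For the inductive step, let $K$ be a pure $d$-complex on $n$ vertices with $m = |F_d(K)|$ facets and no subcomplex homeomorphic to $S^d$. For each unordered pair of distinct vertices $\{v_1, v_2\} \subseteq V(K)$, define the common link $L_{v_1, v_2}$ to be the $(d-1)$-complex on $V(K) \setminus \{v_1, v_2\}$ whose $(d-1)$-dimensional faces are exactly those $\sigma$ with $\sigma \cup \{v_1\}, \sigma \cup \{v_2\} \in F_d(K)$. The key observation is that $L_{v_1, v_2}$ contains no $(d-1)$-sphere as a subcomplex: if $\Sigma \subseteq L_{v_1, v_2}$ were such a sphere, then by construction every facet of $\Sigma$ extends by either $v_1$ or $v_2$ to a facet of $K$, so the join $\{v_1, v_2\} * \Sigma \cong S^0 * S^{d-1} \cong S^d$ would sit inside $K$, a contradiction. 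The inductive hypothesis therefore gives $|F_{d-1}(L_{v_1, v_2})| \leq \ex(n-2, S^{d-1}) = O(n^{d - 1/2^{d-2}})$.

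The final step is to double count the set of triples $(\{v_1, v_2\}, \sigma)$ with $\sigma \cup \{v_1\}, \sigma \cup \{v_2\} \in F_d(K)$. Grouping by vertex pair bounds this count by $\binom{n}{2} \cdot O(n^{d - 1/2^{d-2}}) = O(n^{d + 2 - 1/2^{d-2}})$. Grouping by $(d-1)$-face $\sigma$ instead gives $\sum_\sigma \binom{\deg_K(\sigma)}{2}$, where $\deg_K(\sigma)$ is the number of facets of $K$ containing $\sigma$. Since $\sum_\sigma \deg_K(\sigma) = (d+1) m$ and the number of $(d-1)$-faces is at most $\binom{n}{d} = O(n^d)$, Cauchy-Schwarz yields $\sum_\sigma \binom{\deg_K(\sigma)}{2} = \Omega(m^2/n^d)$ whenever $m$ exceeds a small multiple of $n^d$ (so that the quadratic term dominates the linear correction, a condition that holds automatically near the target bound). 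Combining the two estimates yields $m^2 = O(n^{2d+2-1/2^{d-2}})$, and since $\tfrac{1}{2}(2d+2-1/2^{d-2}) = d+1-1/2^{d-1}$, taking square roots produces $m = O(n^{d+1-1/2^{d-1}})$, as claimed.

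There is no serious obstacle: the induction is driven entirely by the one-line suspension identity and a standard Cauchy-Schwarz, and the exponent $d+1-1/2^{d-1}$ is exactly what falls out of the arithmetic (the defect $1/2^{d-2}$ in the inductive hypothesis gets halved to $1/2^{d-1}$ by the square-root step). The only mild technicality is checking that the linear error term in Cauchy-Schwarz is dominated by the quadratic main term, which holds trivially at the scale of the target bound.
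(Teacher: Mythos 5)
Your proof is correct and follows essentially the same route as the paper: the same double count of pairs of facets sharing a $(d-1)$-face, the same Cauchy--Schwarz/convexity step, and the same use of the suspension $S^0 * S^{d-1} \cong S^d$ to feed the inductive hypothesis into the common link of a heavy vertex pair. The only cosmetic difference is that you anchor the induction at $d=2$ via Brown--Erd\H{o}s--S\'os, whereas the paper starts at $d=1$ (cycles, where $\ex(n,S^1)=n-1$) and derives the $d=2$ case as part of the same induction.
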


\section{Preliminaries}
Homeomorphs of $S^1$ are just cycle graphs and so for this reason the Tur\'an number for simplicial 1-spheres is trivially $n - 1$. For homeomorphs of $S^2$, Brown, Erd\H{o}s, and S\'os \cite{BrownErdosSos} give a double counting argument to show the $O(n^{5/2})$ upper bound on $\ex(n, S^2)$. We induct on this argument to prove Proposition \ref{upperboundtheorem}. For our lower bound, we follow the strategy of Linial's proof of $\ex(n, S^2) \geq \Omega(n^{5/2})$. Let us start with a review of that argument. A fundamental part of Linial's randomized construction is an enumeration result of Tutte.

\begin{lemma}[Tutte \cite{TutteCensus}]
The number of simplicial $3$-polytopes with $k$ labeled vertices is asymptotically $O \left(k! \left( \frac{256}{27} \right)^k \right)$. 
\end{lemma}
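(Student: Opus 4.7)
The plan is to follow Tutte's original census approach via generating functions, converting his count of \emph{rooted} triangulations into a count of labeled simplicial $3$-polytopes. First I would recall Steinitz's theorem, which identifies simplicial $3$-polytopes with $3$-connected planar triangulations, and then define a triangulation to be \emph{rooted} by distinguishing an oriented edge on a distinguished (outer) triangular face. Rooting eliminates all nontrivial automorphisms, so rooted objects are easier to enumerate than unlabeled ones. I would let $T(x)$ be the generating function counting rooted simplicial triangulations of the sphere by number of vertices, and introduce an auxiliary generating function $U(x,y)$ counting rooted near-triangulations (triangulations of a disc) with $x$ tracking interior vertices and $y$ tracking the length of the boundary cycle.

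Next I would derive a functional equation for $U$ by a root-edge decomposition: deleting the root edge of a near-triangulation either disconnects the boundary into two smaller near-triangulations glued along a vertex, or exposes a single interior vertex whose link is a smaller boundary. Carrying out the case analysis produces a quadratic-type equation satisfied by $U(x,y)$; restricting to the $3$-connected case and closing up the boundary back to a sphere then yields an algebraic equation for $T(x)$. From this I would apply Lagrange inversion (or direct singularity analysis in the spirit of Flajolet--Sedgewick) to extract the coefficients, obtaining the classical asymptotic
\[
[x^k] T(x) \;\sim\; C\, k^{-5/2} \left(\tfrac{256}{27}\right)^{k},
\]
for some constant $C > 0$, i.e.\ an explicit power of $256/27$ per vertex up to a subexponential factor.

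Finally I would pass from the rooted count to the labeled count. Since a rooted triangulation has at most $O(k)$ choices of root (one oriented edge per face, and there are $O(k)$ faces by Euler's formula), the number of unlabeled simplicial $3$-polytopes on $k$ vertices is at most $O(k^{-5/2}(256/27)^k) \cdot O(k^{-1}) \cdot k = O((256/27)^k)$ up to polynomial factors, and each unlabeled object contributes at most $k!$ labeled ones. Multiplying gives the claimed bound $O(k!\,(256/27)^k)$, with the polynomial corrections absorbed into the big-$O$.

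The main obstacle is step two: the combinatorial decomposition of near-triangulations must be done carefully to keep track of the $3$-connectivity condition so that what is counted really is polytopal, since naive triangulation counts include non-simplicial (i.e.\ polytopally degenerate) instances with separating triangles. Handling this either requires restricting the decomposition to $3$-connected cases directly or subtracting off the non-$3$-connected contributions via an inclusion--exclusion on separating $3$-cycles, and this is where Tutte's technical work in the census papers is concentrated. Once the algebraic equation is in hand, the asymptotic analysis and the rooted-to-labeled conversion are comparatively routine.
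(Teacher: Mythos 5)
The paper does not prove this lemma; it is cited directly from Tutte's census paper, with the understanding (made explicit just afterward via Steinitz's theorem) that counting simplicial $3$-polytopes is the same as counting simplicial $2$-spheres. So you are reconstructing Tutte's own argument, not an argument in the paper. As an overview of Tutte's census method your outline is broadly right: identify simplicial $3$-polytopes with $3$-connected planar triangulations, root by marking an oriented edge so that automorphisms disappear, set up a generating function for rooted near-triangulations with an extra variable for boundary length, derive a quadratic functional equation via root-edge deletion, specialize back to the sphere and to the $3$-connected case, and extract a growth constant of $256/27$ per vertex with a polynomial correction. Your flag about the $3$-connectivity bookkeeping being the hard part of the census papers is also accurate.

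The one step I would push back on is the rooted-to-labeled conversion, where your inequality runs the wrong way. You argue that an unlabeled triangulation has $O(k)$ roots, which gives $\#\text{rooted} \leq O(k)\cdot \#\text{unlabeled}$, i.e.\ a \emph{lower} bound on the unlabeled count in terms of the rooted count, not an upper bound. The clean version is the opposite trivial inclusion: every unlabeled triangulation gives rise to at least one rooted one, so $\#\text{unlabeled} \leq \#\text{rooted}$, and then $\#\text{labeled} \leq k!\cdot \#\text{unlabeled} \leq k!\cdot O(k^{-5/2}(256/27)^k) = O\bigl(k!\,(256/27)^k\bigr)$. The factors $O(k^{-1})\cdot k$ in your display cancel to nothing and obscure rather than carry the argument, so I would remove them. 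Also, in a planar map each triangular face is incident to three oriented edges (and, dually, each oriented edge has exactly one face to its left), so ``one oriented edge per face'' undercounts; the number of roots is $2\cdot(3k-6)=O(k)$, which is what you need anyway.
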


Moreover, it is well known by Steinitz' Theorem that \emph{every} simplicial 2-sphere is the boundary of a 3-polytope, see Chapter 4 of \cite{Ziegler1995}. From here, the proof for the lower bound for $\ex(n, S^2)$ follows by a standard probabilistic argument. Take $Y_d(n, p)$ to denote the Linial--Mesulam random $d$-complex model, i.e., each face of dimension $(d - 1)$ is present, and the $d$-faces are included independently with probability $p$. Recall that for random simplicial complexes (including random graphs), a property is said to hold with high probability if the property holds with probability tending to one as the number of vertices goes to infinity. 

For the question of the extermal construction for the 2-sphere one takes $Y \sim Y_2(n, p)$ with $p = {\epsilon}/{\sqrt{n}}$, $\epsilon > 0$ a small constant. As the number of triangles in $Y$ follows a binomial distribution with $\binom{n}{3}$ trials and success probability $p$, it follows that for $p = \epsilon/\sqrt{n}$ with high probability the number of triangles in $Y_2(n, p)$ is $\Theta(n^{5/2})$. 

On the other hand, by the Euler characteristic a simplicial 2-sphere with $k$ vertices has $2k - 4$ triangles, so by the linearity of expectation and Tutte's result the expected number of embedded 2-spheres is at most
\begin{eqnarray*}
\sum_{k = 4}^{\infty} \binom{n}{k} k! C^k \left(\frac{\epsilon}{\sqrt{n}} \right)^{2k - 4} &\leq& O(n^2),
\end{eqnarray*}
for $\epsilon < 1/C$. With high probability, there are therefore $o(n^{5/2})$ embedded 2-spheres, and after simply deleting a triangle from each one, there are still $\Theta(n^{5/2})$ triangles, but no 2-spheres.

Our strategy to prove Theorem \ref{maintheorem} is to adapt this randomized argument to higher dimensions. In place of Tutte's result, we assume that we work with classes of spheres which has exponential size in terms of the number of facets. As discussed in the introduction, this is a reasonable assumption as there are many natural classes of simplicial $d$-spheres for which such a bound provably holds, and, as far as current knowledge goes, such a bound may hold for all $d$-spheres. As discussed in the introduction, the classes of spheres for which it is known to have exponential size include all simplicial spheres in the following hierarchy: 
\[\{\text{polytopal spheres}\} \subseteq \{\text{shellable spheres}\} \subseteq \{\text{LC spheres}\} \subseteq \{\text{2-LC spheres}\},\]
as well as spheres that have bounded discrete Morse vector.

Since we do not know whether the collection of all $d$-spheres has exponential size, it could be that the true value of $\ex(n, S^d)$ is smaller than our conditional lower bound from Theorem \ref{maintheorem}. However, if that is the case, a proof of a better upper bound would have to be exceptionally difficult, as it would necessarily involve finding a simplicial sphere that does not belong to any of the classes listed above. On the other hand, the proof of our upper bound in Proposition \ref{upperboundtheorem} finds an iterated suspension of a cycle in every sufficiently dense $d$-complex, which in particular is a polytopal sphere. 

For context regarding the common types of spheres for which our lower bound holds unconditionally, we review some standard definitions. 

\begin{definition}
    A \emph{polytopal $d$-sphere} is the boundary of a simplicial $(d + 1)$-dimensional polytope.
\end{definition}

\begin{definition}
    A $d$-dimensional simplicial complex $X$ is \emph{shellable} if it is pure $d$-dimensional and there is an ordering $\sigma_1, ..., \sigma_m$ of all the facets of $X$ so that for all $2 \leq k \leq m$, $(\sigma_1 \cup \cdots \cup \sigma_{k - 1}) \cap \sigma_k$ is pure $(d - 1)$-dimensional.
\end{definition}

All polytopes are shellable \cite{BruggesserMani}, but not all shellable spheres are polytopal and not all simplicial spheres are shellable; see the discussion in Chapter 8 of \cite{Ziegler1995}. With respect to counting by \emph{vertices} there is an upper bound of Goodman and Pollack of $n^{(d+1)(d+2)n}$ polytopal $d$-spheres on $n$ labeled vertices, while Kalai showed there are at least $\exp(\Theta(n^{\lfloor d/2 \rfloor}))$ simplicial $d$-spheres on $n$ labeled vertices \cite{KalaiSpheres}; Lee \cite{Lee} showed that the spheres constructed in \cite{KalaiSpheres} are shellable. 

Broadening out from shellable spheres we have LC-spheres introduced by Durhuus and J\'onsson \cite{DurhuusJonsson}, and more generally $t$-LC spheres introduced by Benedetti and the second author \cite{BenedettiPavelka}. Of particular interest here is the case $t = 2$ as these are the largest class in the $t$-LC heirarchy for which there is a proof of exponential size.

\begin{definition}
A $d$-dimensional simplicial sphere is \emph{$2$-LC} if it is obtainable from a tree of $d$-simplices by recursively identifying two boundary $(d-1)$-faces whose intersection has dimension at least $d-3$.
\end{definition}

Regarding enumeration we have the following bound, which holds also for all 2-LC manifolds. 
\begin{theorem}[Benedetti--Pavelka \cite{BenedettiPavelka}] For any $d \geq 3$, the number of 2-LC $d$-spheres with $m$ facets, for $m$ large, is smaller than $ 2^{\frac{d^3}{2}m}$.
\end{theorem}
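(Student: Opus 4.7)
The plan is to adapt the Durhuus--J\'onsson \cite{DurhuusJonsson} and Benedetti--Ziegler \cite{BenedettiZiegler} encoding strategy, which proves a $2^{d^2 m}$ bound for ordinary LC spheres, to the 2-LC setting. By definition, every 2-LC $d$-sphere with $m$ facets comes with a construction consisting of a rooted tree $T$ of $m$ $d$-simplices together with an ordered sequence of identifications of boundary $(d-1)$-face pairs whose pairwise intersections have dimension at least $d-3$. The goal is to inject 2-LC spheres into the set of possible construction recipes and bound the number of recipes by $2^{d^3 m/2}$.

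First I would count the possible rooted trees $T$ of $m$ $d$-simplices. Each such tree is determined by its dual graph, which is a tree of maximum degree $d+1$ on $m$ nodes, together with a pair of facet-indices on each tree edge. Standard Cayley-type estimates for trees of bounded degree give a count of at most $2^{O(m \log d)}$, which is absorbed into the target bound.

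Next, for a fixed tree $T$, I would encode the identification sequence. Since a tree of $m$ $d$-simplices has $(d-1)m+2$ boundary $(d-1)$-faces, there are exactly $((d-1)m+2)/2 \approx (d-1)m/2$ identifications to perform. Rather than specifying a pair out of $\Theta(m)$ free $(d-1)$-faces at each step, which would be too expensive, the standard trick is to give each identification a short local ``fingerprint'': record a face $\tau$ of dimension $d-2$ or $d-3$ contained in the intersection of the two identified $(d-1)$-faces, plus enough local data at $\tau$ to pick out which two incident $(d-1)$-faces are being glued. Because $\tau$ has already been ``attached'' in the partial complex (either via the original tree or via a previous identification), it can be pointed to with only a bounded number of bits relative to the current construction.

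The main obstacle is the $(d-3)$-dimensional case, which is the genuinely new ingredient in 2-LC beyond LC. When $\tau$ has dimension $d-3$, its link in the partial complex is two-dimensional and carries substantially more combinatorial complexity than the one-dimensional link that appears in the standard LC analysis. I expect a careful amortized argument will be needed: bound the number of pairs of incident $(d-1)$-faces at each non-free $(d-3)$-face using the pseudomanifold-like structure of its link, and then sum the per-step contributions across the identification sequence. The per-step cost should come out to at most $2^{O(d^2)}$ choices (versus $2^{O(d)}$ for LC); combined over the $\Theta(d m)$ identifications this yields $2^{O(d^3 m)}$ overall, and a careful tightening of the constants should pin the exponent to $d^3 m / 2$ for $m$ sufficiently large.
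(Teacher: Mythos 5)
This statement is not proved in the paper you were given; it is quoted as a black-box result, attributed to Benedetti--Pavelka \cite{BenedettiPavelka}, and the present authors use it only as one of several known ``exponential size'' inputs to Theorem~\ref{maintheorem}. So there is no in-paper proof against which to compare your outline; I can only assess it as a free-standing sketch of what the cited source presumably does.

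As a sketch, your overall architecture is the right one: tree of $d$-simplices (bounded by a Cayley-type count), plus an encoding of the ordered sequence of ridge identifications, plus the observation that the genuinely new difficulty relative to LC is that the shared face $\tau$ may now have dimension $d-3$ rather than $d-2$, so its link in the partial boundary is $2$-dimensional rather than a cycle. But the load-bearing step in your outline is asserted rather than argued, and as stated it would fail. You claim that because $\tau$ is already present in the partial complex it ``can be pointed to with only a bounded number of bits.'' That is not true in the naive sense: at any given stage there are $\Theta(m)$ boundary faces of dimension $d-3$, so an unstructured pointer costs $\Theta(\log m)$ bits, and over the $\Theta(dm)$ identifications this gives a $2^{\Theta(dm\log m)}$ overcount, which is super-exponential in $m$ and destroys the theorem. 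The actual encoding argument (already in Benedetti--Ziegler \cite{BenedettiZiegler} for LC) avoids this by not pointing at $\tau$ globally at all: the gluing data is recorded \emph{locally}, attached to the ridges of the fixed tree (roughly, a bounded amount of combinatorial information per ridge of the tree, from a set of size $2^{O(d^2)}$), and the whole gluing sequence is then reconstructed from this static local labeling. You would need to explain how such a local labeling determines the sequence of $(d-3)$-face identifications, and how the pseudomanifold/LC structure of the evolving $2$-dimensional links forces the reconstruction to be unambiguous. Without that, the ``amortized argument'' you invoke at the end is a placeholder rather than a proof, and the stated constant $d^3/2$ is not within reach of what you have written.
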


Beyond the exponential-size assumption, another key result in our proof is a lower bound theorem of Goff, Klee, and Novik for balanced spheres. Recall that a simplicial $d$-sphere, or more generally a $d$-complex, is balanced if it is possible to properly color the underlying graph with exactly $d + 1$ colors. In exactly the same way that Tur\'an numbers for non-bipartite graphs are always quadratic in $n$, $\ex(n, X) = \Theta(n^{d + 1})$ for any $d$-complex $X$ which is not balanced as we can simply take the complete $(d + 1)$-partite hypergraph with $n/(d + 1)$ vertices in each part and have an $X$-free $d$-complex with $\Omega(n^{d + 1})$ facets. If we can construct a complex $Y$ that contains no balanced triangulation of $S^d$ then by coloring each vertex of $Y$ uniformly at random from a set of $d + 1$ colors and keeping only the rainbow facets, we end up with a complex $Y'$ that has no $d$-spheres at all and $f_d(Y')/f_d(Y)$ very close to $\frac{(d + 1)!}{(d + 1)^{d + 1}}$ with high probability, so we lose only a constant factor in the number of facets.

\section{Upper bound}
The proof for the upper bound is a straightforward induction from the standard proof for the upper bound in the $d = 2$ case; however, it does not seem to appear in the literature, so we include it here. For this upper bound, we recall the definition of a link in a simplicial complex. For $X$ a simplicial complex and $\sigma$ a face of $X$, the link of $\sigma$ in $X$ is defined as 
\[\lk(\sigma) = \{\tau \in X \mid \sigma \cap \tau = \emptyset \text{ and } \sigma \cup \tau \in X \}.\]

\begin{proof}[Proof of Proposition \ref{upperboundtheorem}]
By induction on $d$. The $d = 1$ case is trivial, as it is just the Tur\'an number for a cycle which is exactly $n - 1$. For $d > 1$ suppose that we have a $d$-complex with at least $C n^{d + 1 - 1/2^{d - 1}}$ facets for some large constant $C$ that depends on $d$. Now for $u$ and $v$ vertices, let $d(u, v)$ denote the number of $(d - 1)$ faces in $\lk(u) \cap \lk(v)$ and take the following sum.
\begin{eqnarray*}
\sum_{v, u} d(u, v) &=& \sum_{\sigma \in \mathcal{F}_{d - 1}} \binom{\deg(\sigma)}{2},
\end{eqnarray*}
where $\mathcal{F}_{i}$ denotes the set of $i$-dimensional faces and the degree of a face is the number of faces of one dimension larger that contain it. We continue by estimating the above expression from below using convexity.
\begin{eqnarray*}
\sum_{\sigma \in \mathcal{F}_{d - 1}} \binom{\deg(\sigma)}{2} &\geq& \sum_{\sigma \in \mathcal{F}_{d - 1}} \frac{\deg(\sigma)^2}{4} \\
&\geq& \frac{1}{4} \frac{ \left(\sum_{\sigma \in \mathcal{F}_{d - 1}} \deg(\sigma)\right)^2}{f_{d - 1}} \\
&\geq& \frac{1}{4} \frac{ ((d + 1) f_d)^2 }{n^d} \\
&\geq& \Omega \left( \frac{n^{2d + 2 - \frac{1}{2^d - 2}}}{n^d} \right) \\
&=& \Omega \left(n^{d + 2 - \frac{1}{2^{d - 2}}}\right) .
\end{eqnarray*}
As the sum was taken over all pairs of vertices this means that there is a pair of vertices with at least $\Omega(n^{d - 1/2^{d - 2}})$-many $(d - 1)$-faces in the common link. By induction this common link contains a $(d - 1)$-sphere and so we have found a $d$-sphere as the suspension of a $(d - 1)$-sphere.
\end{proof}

\section{Lower bound}

Here we prove Theorem \ref{maintheorem}. As discussed we will use a lower bound theorem for balanced spheres due to Goff, Klee, and Novik:
\begin{theorem}[From Theorem 5.3 of Goff--Klee--Novik \cite{GoffKleeNovik}] \label{GKNLowerBound}
Let $X$ be a balanced, simplicial $d$-sphere with $n$ vertices. Then 
\[f_d(X) \geq \frac{2^{d + 1} - 2}{d + 1} n - 2^{d + 1} + 4.\]
\end{theorem}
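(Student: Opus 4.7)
The plan is to analyze $X$ through its flag $h$-vector. For a balanced $d$-complex with color palette $[d+1]$, let $f_S$ count the faces whose vertex color set equals $S\subseteq[d+1]$, and define the flag $h$-numbers by M\"obius inversion $h_S = \sum_{T\subseteq S}(-1)^{|S|-|T|} f_T$. In particular $f_d(X)=\sum_{S\subseteq[d+1]} h_S(X)$. Two structural facts about a balanced simplicial sphere will be essential. First, by Reisner's criterion $X$ is Cohen--Macaulay, so every flag $h$-number is non-negative. Second, the balanced Dehn--Sommerville relations of Bayer--Billera give $h_S(X)=h_{[d+1]\setminus S}(X)$. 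Combined with the easy identities $h_\emptyset = h_{[d+1]} = 1$ and $\sum_i h_{\{i\}}(X) = n-(d+1)$, these already yield a crude bound $f_d(X) \ge 2n-2d$, which is too weak; the real content lies in estimating the intermediate flag $h$-numbers.

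The core claim to establish is the \emph{balanced generalized lower bound}
\[
\sum_{|S|=k} h_S(X) \;\ge\; \binom{d+1}{k}\cdot\frac{n-(d+1)}{d+1} \qquad \text{for every } 1\le k\le d.
\]
For $k=1$ this is an equality, since $\sum_i h_{\{i\}} = n-(d+1)$, and by Dehn--Sommerville it is also an equality for $k=d$. For intermediate $k$, I would induct on the dimension $d$, using the identity
\[
\sum_{v\in V_i} h_T(\lk(v)) \;=\; h_T(X) + h_{T\cup\{i\}}(X), \qquad T \subseteq [d+1]\setminus\{i\},
\]
which follows directly from expanding both sides with the definition of the link. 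The link of a vertex of color $i$ is a balanced $(d-1)$-sphere on the other $d$ colors, so the inductive hypothesis applies. Summing the identity over $i\in[d+1]$ and over $T$ of fixed size relates $\sum_{|S|=k+1} h_S(X)$ to $\sum_{|T|=k} h_T(X)$ and to the edge count $f_1(X)$, the latter of which must be controlled separately via a balanced-rigidity lower bound on the $1$-skeleton of the form $f_1(X) \ge \tfrac{3d}{2} n - d(d+1)$.

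Granted the displayed inequality, the theorem follows by direct computation. Using $\sum_{k=1}^{d}\binom{d+1}{k} = 2^{d+1}-2$,
\[
f_d(X) \;=\; 2 + \sum_{k=1}^{d}\sum_{|S|=k} h_S(X) \;\ge\; 2 + \frac{n-(d+1)}{d+1}\bigl(2^{d+1}-2\bigr),
\]
which rearranges to $\tfrac{2^{d+1}-2}{d+1}\,n - 2^{d+1}+4$, as desired. Equality is achieved throughout by the cross-polytope sphere on $2(d+1)$ vertices, confirming that each of the intermediate inequalities is sharp.

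The main obstacle is establishing the intermediate-$k$ inequality. This is a balanced analog of Stanley's classical generalized lower bound theorem for simplicial spheres, and its proof is delicate precisely because the cross-polytope sphere saturates every line in the argument. A rigorous justification will likely require either a balanced rigidity theorem for the $1$-skeleton (in the spirit of Klee--Novik) to supply the needed bound on $f_1(X)$, or an algebraic argument that reduces the Stanley--Reisner ring of $X$ modulo a \emph{colored} linear system of parameters and analyzes the resulting Artinian ring via its socle structure, rather than purely combinatorial manipulation of the flag $h$-vector.
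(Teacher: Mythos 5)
You should note at the outset that the paper does not prove this statement at all: it is imported verbatim as Theorem 5.3 of Goff--Klee--Novik, so the only ``proof'' in the paper is the citation, and your attempt has to stand on its own. Your framework and reduction are correct as far as they go: the flag $h$-numbers $h_S$ are nonnegative for a balanced Cohen--Macaulay complex (Stanley), the balanced Dehn--Sommerville relations give $h_S = h_{[d+1]\setminus S}$ for a balanced sphere, $f_d(X)=\sum_S h_S$, $\sum_i h_{\{i\}} = n-(d+1)$, your link identity $\sum_{v\in V_i} h_T(\lk(v)) = h_T(X)+h_{T\cup\{i\}}(X)$ checks out, and the arithmetic showing that $\sum_{|S|=k}h_S \ge \binom{d+1}{k}\frac{n-(d+1)}{d+1}$ for $1\le k\le d$ implies the stated bound is right. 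The problem is that this intermediate inequality \emph{is} the theorem: since $\sum_{|S|=k}h_S = h_k$ for balanced complexes, your core claim is $h_k/\binom{d+1}{k}\ge h_1/(d+1)$ for all middle $k$, a weak form of the balanced generalized lower bound inequalities, and you explicitly leave its proof open (``will likely require either a balanced rigidity theorem \dots or an algebraic argument''). That is an acknowledgement of the gap, not a proof.

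Moreover, the inductive scheme you sketch does not close as described. Summing your link identity over $i\in[d+1]$ and over $T$ with $|T|=k$, and applying the inductive hypothesis to the vertex links (balanced $(d-1)$-spheres with $\deg(v)$ vertices), yields $(k+1)\sum_{|S|=k+1}h_S + (d+1-k)\sum_{|T|=k}h_T \ \ge\ \binom{d}{k}\frac{2f_1(X)-dn}{d}$, i.e.\ a lower bound on a \emph{positive} combination of two consecutive levels. To extract a lower bound on level $k+1$ you would need an \emph{upper} bound on level $k$, which is not available (and is false in general for dense spheres), so the recursion runs in the wrong direction; Dehn--Sommerville symmetry alone does not rescue it. In addition, the auxiliary edge bound $f_1(X)\ge \tfrac{3d}{2}n - d(d+1)$ that you propose to ``control separately'' is itself the balanced lower bound theorem for spheres (tight exactly on connected sums of cross-polytopes), a result of comparable depth to the one you are trying to prove, so invoking it is circular in spirit. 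In short: correct setup and a correct reduction, but the essential content of Goff--Klee--Novik's theorem is assumed rather than established.
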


With this result, we prove the following enumeration lemma that will be critical for our randomized construction. For a collection $\mathcal{S}$ of $d$-dimensional simplicial complexes we denote by $\mathcal{S}_m$ the subcollection of $\mathcal{S}$ of complexes having exactly $m$ facets.
\begin{lemma}\label{keylemma}
Let $\mathcal{S}$ be a collection of unlabeled, balanced simplicial $d$-spheres that has exponential size, say $|\mathcal{S}_m| \leq C^m$. Then the number of labeled copies of elements of $\mathcal{S}_m$ in the simplex $\Delta_{n - 1}$ is at most
\[C^m n^{(d + 1)m/(2^{d + 1} - 2)} n^{(d + 1)(2^{d + 1} - 4)/(2^{d + 1} - 2)}.\]
\end{lemma}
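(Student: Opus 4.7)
The plan is direct: combine the exponential-size bound $|\mathcal{S}_m| \leq C^m$ with the Goff--Klee--Novik lower bound (Theorem \ref{GKNLowerBound}) to control how many vertices a balanced $d$-sphere with $m$ facets can have, and then bound embeddings of each unlabeled sphere by a trivial injection count.

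First I would fix an unlabeled $S \in \mathcal{S}_m$ with $k(S)$ vertices and bound the number of labeled copies of $S$ inside $\Delta_{n-1}$ by the number of injections from the vertex set of $S$ into $[n]$, which is at most $n^{k(S)}$. (Quotienting by $|\Aut(S)|$ only improves the bound, so I simply drop it.) Summing over unlabeled $S \in \mathcal{S}_m$ and using the exponential-size hypothesis gives
\[\#\{\text{labeled copies}\} \;\leq\; \sum_{S \in \mathcal{S}_m} n^{k(S)} \;\leq\; |\mathcal{S}_m|\cdot n^{\max_{S \in \mathcal{S}_m} k(S)} \;\leq\; C^m \cdot n^{\max_{S \in \mathcal{S}_m} k(S)}.\]

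Next I would convert the balance hypothesis into a linear upper bound on $k(S)$ in terms of $m$. Since each $S \in \mathcal{S}_m$ is balanced with $k(S)$ vertices and $m$ facets, Theorem \ref{GKNLowerBound} gives
\[m \;=\; f_d(S) \;\geq\; \frac{2^{d+1}-2}{d+1}\,k(S) - 2^{d+1} + 4,\]
which rearranges directly to
\[k(S) \;\leq\; \frac{(d+1)m}{2^{d+1}-2} + \frac{(d+1)(2^{d+1}-4)}{2^{d+1}-2}.\]

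Substituting this bound into the previous display yields exactly the claimed inequality. There is no real obstacle here: the lemma is essentially an arithmetic packaging of two inputs. The conceptual point is that restricting to balanced spheres is what makes this argument actually work --- without the Goff--Klee--Novik inequality one only gets the trivial $k(S) \leq (d+1)m$ bound, which is too weak to beat the binomial $\binom{n}{k}$ factor in the expected-number-of-copies calculation driving Theorem \ref{maintheorem}. The exponent $(d+1)/(2^{d+1}-2)$ extracted here is precisely the exponent that appears in the final lower bound.
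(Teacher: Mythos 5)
Your proof is correct and matches the paper's argument essentially line for line: bound labeled copies of each unlabeled sphere by $n^{f_0}$, convert Theorem~\ref{GKNLowerBound} into the upper bound $f_0 \leq \frac{(m + 2^{d+1}-4)(d+1)}{2^{d+1}-2}$, and multiply by the $C^m$ choices of unlabeled type. The closing remark on why balance is needed is a fair summary of the role the lemma plays in Theorem~\ref{maintheorem}.
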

\begin{proof}
For an unlabeled, balanced simplicial $d$-sphere $M \in \mathcal{S}_m$ the number of labeled copies of $M$ in $\Delta_{n - 1}$ is at most the number of ways to label the vertices of $M$ with elements from $[n]$. The number of such labelings is at most
\[n^{f_0(M)}.\]
By Theorem \ref{GKNLowerBound}, 
\[m = f_d(M) \geq \frac{2^{d + 1} - 2}{d + 1} f_0(M) - 2^{d + 1} + 4\]
and so 
\[f_0(M) \leq \frac{(m + 2^{d + 1} - 4)(d + 1)}{2^{d + 1} - 2}. \]
Thus,
\[n^{f_0(M)} \leq n^{\frac{(m + 2^{d + 1} - 4)(d + 1)}{2^{d + 1} - 2}}.\]
The result follows as there are at most $C^m$ choices for $M$.
\end{proof}

We can now prove Theorem \ref{maintheorem} via a standard alterations argument.
\begin{proof}[Proof of Theorem \ref{maintheorem}]
 Let $\mathcal{S}$ be a family of unlabeled $d$-spheres that has exponential size. Take $Y \sim Y_d(n, \epsilon n^{-(d + 1)/(2^{d + 1} - 2)})$ for $\epsilon$ a small constant to be set later. By a usual application of the Chernoff bound, with high probability $Y$ has at least $\Omega(n^{d + 1 - (d + 1)/(2^{d + 1} - 2)})$ facets. By Lemma \ref{keylemma}, the expected number of \emph{balanced} complexes in $\mathcal{S}$ that are included in $Y$ is at most
\begin{eqnarray*}
&&\sum_{m = 2^{d + 1}}^{\infty} C^m n^{(d + 1) m/(2^{d + 1} - 2)} n^{(d + 1)(2^{d + 1} - 4)/(2^{d + 1} - 2)} \left(\frac{\epsilon}{n^{(d + 1)/(2^{d + 1} - 2)}} \right)^m \\
&=& n^{(d + 1)(2^{d + 1} - 4)/(2^{d + 1} - 2)}\sum_{m = 2^{d + 1}}^{\infty} (C\epsilon)^m .
\end{eqnarray*}
For $\epsilon < 1/C$ the above is $O(n^{(d + 1)(2^{d + 1} - 4)/(2^{d + 1} - 2)})$. With high probability, the expected number of embedded balanced complexes from $\mathcal{S}$ in $Y$ is $o(n^{d + 1 - (d + 1)/(2^{d + 1} - 2)})$. Thus, with high probability, we may remove a $o(1)$ fraction of the facets from $Y$ to end up with a complex $\widetilde{Y}$ still with at least $\Omega(n^{d + 1 - (d + 1)/(2^{d + 1} - 2)})$ facets, but without balanced members of $\mathcal{S}$. For the unbalanced members of $\mathcal{S}$ we simply color each vertex of $[n]$ independently and uniformly from the set of $d + 1$ colors. Now, let $Z$ be the subcomplex of $\widetilde{Y}$ generated by the rainbow facets under this coloring. With high probability, $Z$ will contain about a $\frac{(d + 1)!}{(d + 1)^{d + 1}}$ fraction of the facets of $\widetilde{Y}$, but it will not contain any complexes of $\mathcal{S}$.
\end{proof}

\section{Concluding remarks}
The main result here is a conditional lower bound for $\ex(n, S^d)$ and an unconditional lower bound restricted to, for example, polytopal, shellable, or locally constructible spheres. As unbalanced triangulations are easy to avoid, our first open question here is:
\begin{question}
    Does the class of balanced simplicial $d$-spheres have exponential size? If so, then our lower bound holds for all simplicial spheres.
\end{question}

We also mention that our result extends to manifolds. This follows from a result of Juhnke-Kubitzke, Murai, Novik, and Sawaske \cite{JKMNS} that extended the lower bound in Theorem \ref{GKNLowerBound} to manifolds (in fact to homology manifolds). Moreover, the 2-LC manifolds have exponential size by the main result of \cite{BenedettiPavelka}.

Next, we can ask whether our conditional lower bound can be improved or if one should expect a matching upper bound.

\begin{question}
    Is $\ex(n, S^d) \leq O(n^{d + 1 - (d + 1)/(2^{d + 1} - 2)})$?
\end{question}

Regarding this question, we conjecture that the answer is yes. Moreover, \cite{KeevashLongNarayananScott, LongNarayananYap} studied the question of universal exponents for homeomorphs. They define $\lambda_d$ as the minimum value exponent so that for any fixed $d$-complex $S$ the collection $\mathcal{H}(S)$ of homeomorphs of $S$ satisfies:
\[\ex(n, \mathcal{H}(S)) \leq O(n^{d + 1 - \lambda_d}).\]
Note that $\lambda_d$ does not depend on $S$, only on the dimension. Mader \cite{Mader} showed that $\lambda_1 = 1$ and Keevash, Long, Narayanan, and Scott\cite{KeevashLongNarayananScott} proved $\lambda_2 \geq \frac{1}{5}$ and conjectured that $\lambda_2 = \frac{1}{2}$. In higher dimensions, Long, Narayanan, and Yap \cite{LongNarayananYap} showed $\lambda_d \geq \frac{1}{d^{2d^2}}$, but did not make a conjecture about the true value. We have shown that if the collection of all simpicial $d$-spheres has exponential size then $\lambda_d \leq \frac{d + 1}{2^{d + 1} - 2}$. We conjecture that a matching lower bound holds.

\begin{conjecture}
    $\lambda_d \geq \frac{d + 1}{2^{d + 1} - 2}$.
\end{conjecture}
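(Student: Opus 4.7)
The plan is to exhibit a specific $d$-complex $S$ whose family of homeomorphs has Tur\'an number at least $\Omega(n^{d+1-(d+1)/(2^{d+1}-2)})$, so that $\lambda_d$ is forced to be at least $\frac{d+1}{2^{d+1}-2}$. The canonical choice is $S = S^d$, since Theorem~\ref{maintheorem} already supplies the bound conditional on some class containing \emph{all} simplicial $d$-spheres having exponential size. The cleanest route is therefore to upgrade Theorem~\ref{maintheorem} to be unconditional by resolving Gromov's question in the affirmative.

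A more tractable intermediate target, flagged in the open question preceding the conjecture, is to prove only that the class of \emph{balanced} $d$-spheres has exponential size. This already suffices: the proof of Theorem~\ref{maintheorem} first removes balanced forbidden subcomplexes from $Y_d(n,p)$ by alteration, and then kills any remaining (necessarily unbalanced) spheres by a rainbow-coloring sparsification that costs only a constant factor in the facet count. I would attempt such an enumeration by developing a colored analog of the 2-LC recursion of Benedetti--Pavelka: fix an ordering on the $d+1$ color classes, choose a spanning tree in the dual graph on $d$-simplices that respects the coloring, and encode the identifications of boundary $(d-1)$-faces by a record whose length is $O(1)$ bits per facet, with the balancedness constraint substantially reducing the number of legal identifications at each step. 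Combined with Theorem~\ref{maintheorem} this would immediately yield the conjecture.

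If that encoding attempt stalls, a parallel strategy is to bypass global enumeration entirely and exploit structural properties of the random complex $Y \sim Y_d(n, \epsilon n^{-(d+1)/(2^{d+1}-2)})$. For small enough $\epsilon$, one can hope to show that any embedded balanced $d$-sphere in $Y$ is forced, by the sparsity of $Y$, to have bounded local complexity and thus lie in one of the known exponentially-sized classes (for instance, to be 2-LC). Such a ``local structure theorem'' inside sparse random $d$-complexes, together with the already-unconditional lower bound for 2-LC spheres, would close the gap without needing to enumerate arbitrary spheres directly.

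The main obstacle is that Gromov's question has been open for decades, and every presently known exponentially-sized class admits a \emph{recursive} combinatorial construction whose very existence is a nontrivial structural hypothesis. There is no general bounded-bits-per-facet encoding of an arbitrary triangulated $d$-sphere in sight, and in particular not even for balanced spheres, so the balanced-enumeration subproblem above is itself substantial. A proof of the conjecture will likely require either a genuinely new enumeration technique for triangulated spheres or a refinement of the alteration argument that replaces global sphere-counting with local structural control inside sparse random complexes.
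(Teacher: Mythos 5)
This is a conjecture, not a theorem: the paper offers no proof, only heuristic motivation. More importantly, your proposal pursues the \emph{wrong direction} of the inequality. Recall how $\lambda_d$ is defined: it is the best (largest) exponent such that $\ex(n,\mathcal H(S)) \leq O(n^{d+1-\lambda_d})$ for \emph{every} fixed $d$-complex $S$. Thus larger $\lambda_d$ corresponds to a \emph{stronger upper bound} on homeomorph Tur\'an numbers, and your plan---to exhibit some $S$ with $\ex(n,\mathcal H(S)) \geq \Omega(n^{d+1-(d+1)/(2^{d+1}-2)})$---would show that no exponent \emph{better} than $\frac{d+1}{2^{d+1}-2}$ is achievable, i.e.\ it would prove $\lambda_d \leq \frac{d+1}{2^{d+1}-2}$, which is exactly what Theorem~\ref{maintheorem} already supplies (conditionally). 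That is the statement located one sentence \emph{before} the conjecture, not the conjecture itself.

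The conjecture $\lambda_d \geq \frac{d+1}{2^{d+1}-2}$ is the matching \emph{upper bound} problem: it asserts that every sufficiently dense $d$-complex (with $\omega(n^{d+1-(d+1)/(2^{d+1}-2)})$ facets) contains a homeomorph of any fixed $S$. Proving this would need a substructure-finding argument in the spirit of \cite{KeevashLongNarayananScott, LongNarayananYap}, not an enumeration-and-alteration argument in a random complex. The paper's own heuristic points this way: they observe that iterated octahedral flips produce balanced subdivisions of any $X$ with facet-to-vertex ratio tending to $\frac{2^{d+1}-2}{d+1}$, matching the Goff--Klee--Novik bound exactly, and they suggest locating such a sparse subdivision $X_L$ inside a dense complex. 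Resolving Gromov's enumeration question or proving exponential size for balanced spheres, which is where you spend the bulk of your attempt, would not touch the conjecture at all; it bears only on the already-established $\lambda_d \leq \frac{d+1}{2^{d+1}-2}$ direction.
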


Our reason for believing that this conjectured lower bound holds comes from the sharpness of the lower bound theorem of \cite{GoffKleeNovik} that we use in our proof. Their lower bound theorem is demonstrated to be sharp by connected sums of cross-polytopes. For any starting complex $X$ we can use connected sums with cross-polytopes to find balanced triangulations of $X$ that are asymptotically as sparse as possible. To explain how this triangulation works, we define for $\sigma$ a facet an \emph{octahedral flip of $X$ at $\sigma$} to be the complex $X'$ obtained from $X$ by taking its connected sum with the boundary of the $(d + 1)$-cross-polytope, denoted $\diamond_{d}$, at $\sigma$. This means that one connects $X$ to $\diamond_d$ along a common face $\sigma$ and then deletes $\sigma$. Suppose we triangulate $X$ as follows. Let $X_1$ be the barycentric subdivision of $X$ (so that we start with a balanced complex). From here, let $X_{i + 1}$ be a complex obtained from $X_i$ by performing an octahedral flip at some facet of $X_i$. Every octahedral flip adds $(d + 1)$ vertices and a net total of $2^{d + 1} - 2$ facets. As $i \rightarrow \infty$ the density $f_{d}(X_i)/f_0(X_i)$ therefore tends to $\frac{2^{d + 1} - 2}{d + 1}$, and we can get a balanced triangulation of $X$ that is arbitrarily close to the lower bound theorem for balanced spheres. A potential strategy, at least for showing that $\lambda_d \geq \frac{d + 1}{2^{d + 1} - 2} - \epsilon$, would be to find a subdivision $X_L$ of $X$ for $L = L(\epsilon)$ in a dense enough $d$-complex. Prior work on universal exponents from \cite{KeevashLongNarayananScott, LongNarayananYap} looks for very specific subdivisions, so it seems reasonable to expect that the sparsest subdivision would give the best bound.

\section*{Acknowledgement}
We thank Florian Frick for introducing us to the Tur\'an problem we study here.

\bibliography{ResearchBibliography}

\providecommand{\MR}[1]{}
\providecommand{\bysame}{\leavevmode\hbox to3em{\hrulefill}\thinspace}
\providecommand{\MR}{\relax\ifhmode\unskip\space\fi MR }
\providecommand{\MRhref}[2]{%
  \href{http://www.ams.org/mathscinet-getitem?mr=#1}{#2}
}
\providecommand{\href}[2]{#2}
\begin{thebibliography}{10}

\bibitem{BenedettiMorse}
Bruno Benedetti, \emph{Discrete morse theory for manifolds with boundary}, Transactions of the AMS \textbf{364} (2012), 6631--6670.

\bibitem{BenedettiPavelka}
Bruno Benedetti and Marta Pavelka, \emph{2-lc triangulated manifolds are exponentially many}, Annales de l’Institut Henri Poincar{\'e} D (2021).

\bibitem{BenedettiZiegler}
Bruno Benedetti and G\"{u}nter~M. Ziegler, \emph{On locally constructible spheres and balls}, Acta Math. \textbf{206} (2011), no.~2, 205--243. \MR{2810852}

\bibitem{BruggesserMani}
H.~Bruggesser and P.~Mani, \emph{Shellable decompositions of cells and spheres}, Math. Scand. \textbf{29} (1971), 197--205. \MR{328944}

\bibitem{DurhuusJonsson}
Bergfinnur Durhuus and Thordur Jónsson, \emph{Remarks on the entropy of 3-manifolds}, Nuclear Physics B \textbf{445} (1995), 182--192.

\bibitem{GoffKleeNovik}
Michael Goff, Steven Klee, and Isabella Novik, \emph{Balanced complexes and complexes without large missing faces}, Ark. Mat. \textbf{49} (2011), no.~2, 335--350. \MR{2826947}

\bibitem{GromovSpaces}
Misha Gromov, \emph{Spaces and questions}, Geom. Funct. Anal. (2000), 118--161, GAFA 2000 (Tel Aviv, 1999). \MR{1826251}

\bibitem{JKMNS}
Martina Juhnke-Kubitzke, Satoshi Murai, Isabella Novik, and Connor Sawaske, \emph{A generalized lower bound theorem for balanced manifolds}, Math. Z. \textbf{289} (2018), no.~3-4, 921--942. \MR{3830232}

\bibitem{KalaiSpheres}
Gil Kalai, \emph{Many triangulated spheres}, Discrete Comput. Geom. \textbf{3} (1988), no.~1, 1--14. \MR{918176}

\bibitem{KeevashSurvey}
Peter Keevash, \emph{Hypergraph {T}ur\'{a}n problems}, Surveys in combinatorics 2011, London Math. Soc. Lecture Note Ser., vol. 392, Cambridge Univ. Press, Cambridge, 2011, pp.~83--139. \MR{2866732}

\bibitem{KeevashLongNarayananScott}
Peter Keevash, Jason Long, Bhargav Narayanan, and Alex Scott, \emph{A universal exponent for homeomorphs}, Israel J. Math. \textbf{243} (2021), no.~1, 141--154. \MR{4299144}

\bibitem{KupavskiiPolyanskiiTomonZakharov}
Andrey Kupavskii, Alexandr Polyanskii, Istv\'{a}n Tomon, and Dmitriy Zakharov, \emph{The extremal number of surfaces}, Int. Math. Res. Not. IMRN (2022), no.~17, 13246--13271. \MR{4475277}

\bibitem{Lee}
C.~W. Lee, \emph{Kalai’s squeezed spheres are shellable}, Discrete Comput Geom \textbf{24} (2000), 391–396.

\bibitem{LinialSlides}
N.~Linial, \emph{Challenges of high-dimensional combinatorics}, Lov\'asz's Seventieth Birthday Conference, 2018.

\bibitem{LongNarayananYap}
Jason Long, Bhargav Narayanan, and Corrine Yap, \emph{Simplicial homeomorphs and trace-bounded hypergraphs}, Discrete Anal. (2022), Paper No. 6, 12. \MR{4450657}

\bibitem{Mader}
W.~Mader, \emph{Homomorphieeigenschaften und mittlere {K}antendichte von {G}raphen}, Math. Ann. \textbf{174} (1967), 265--268. \MR{220616}

\bibitem{NevoSantosWilson}
Eran Nevo, Francisco Santos, and Stedman Wilson, \emph{Many triangulated odd-dimensional spheres}, Math. Ann. \textbf{364} (2016), no.~3-4, 737--762. \MR{3466849}

\bibitem{PonzanoRegge}
Giorgio Ponzano and Tullio Regge, \emph{Semiclassical limit of racah coefficients}, Spectroscopic and group theoretical methods in physics, ed. F. Bloch, North-Holland Publ. Co. (1968), 1--58.

\bibitem{Regge}
Tullio Regge, \emph{General relativity without coordinates}, Il Nuovo Cimento \textbf{19} (1961), 558--571.

\bibitem{ReggeWilliams}
Tullio Regge and Ruth~M. Williams, \emph{Discrete structures in gravity}, J. Math. Phys. \textbf{41} (2000), 3964--3983.

\bibitem{Sankar}
Maya Sankar, \emph{The tur\'an number of surfaces}, arXiv: 2210.11041.

\bibitem{BrownErdosSos}
V.~T. S\'{o}s, P.~Erd\H{o}s, and W.~G. Brown, \emph{On the existence of triangulated spheres in {$3$}-graphs, and related problems}, Period. Math. Hungar. \textbf{3} (1973), no.~3-4, 221--228. \MR{323647}

\bibitem{TutteCensus}
W.~T. Tutte, \emph{A census of planar triangulations}, Canadian J. Math. \textbf{14} (1962), 21--38. \MR{130841}

\bibitem{Ziegler1995}
G\"{u}nter~M. Ziegler, \emph{Lectures on polytopes}, Graduate Texts in Mathematics, vol. 152, Springer-Verlag, New York, 1995. \MR{1311028}

\end{thebibliography}
\bibliographystyle{amsplain}
\end{document}